\begin{document}

\theoremstyle{plain}
\newtheorem{theorem}{Theorem}
\newtheorem{corollary}[theorem]{Corollary}
\newtheorem{lemma}{Lemma}
\newtheorem{proposition}[theorem]{Proposition}

\theoremstyle{definition}
\newtheorem{definition}[theorem]{Definition}
\newtheorem{example}[theorem]{Example}
\newtheorem{conjecture}[theorem]{Conjecture}

\theoremstyle{remark}
\newtheorem{remark}{Remark}

\begin{center}
\vskip 1cm{\LARGE\bf A Generalization of the Eulerian Numbers}
\vskip 1cm
\large
\textbf{Grzegorz Rz\c{a}dkowski,}\footnote{Department of Finance and Risk Management, Warsaw University of Technology, Narbutta 85, 00-999 Warsaw, Poland, e-mail: grzegorz.rzadkowski@pw.edu.pl} \textbf{Ma\l gorzata Urli\'nska}\footnote{Faculty of Mathematics and Computer Science, Warsaw University of Technology, Koszykowa 75, 00-662 Warsaw,  Poland, e-mail: m.urlinska@gmail.com}\\
\end{center}

\newcommand{\Eulerian}[2]{\genfrac{<}{>}{0pt}{}{#1}{#2}}

\begin{abstract}
In the present paper we generalize the Eulerian numbers (also of the second and third orders). The generalization is connected with an autonomous first-order differential equation, solutions of which are used to obtain integral representations of some numbers, including the Bernoulli numbers. 
\end{abstract}
\noindent 2010 {\it Mathematics Subject Classification}: 11B68; 05A15.

\noindent \emph{Keywords: Eulerian number, second-order Eulerian number, Bernoulli number, autonomous differential equation, integral representation.} 

\section{Introduction}
While working on alternating sums of the form $\sum_{k=1}^{m}(-1)^kk^n$, and then more generally on $\sum_{k=1}^{m}j^kk^n$, in 1736 Euler \cite[Ch.\ 7]{E} discovered polynomials and numbers, which now are known as the Eulerian polynomials and the Eulerian numbers. The Eulerian numbers, denoted by $\Eulerian{n}{k}$, are defined by the recurrence formula
\begin{equation}\label{a0}
	\Eulerian{n+1}{k}=(k+1)\Eulerian{n}{k}+(n-k+1)\Eulerian{n}{k-1},
\end{equation}
with boundary conditions $\Eulerian{0}{0}=1$, $\Eulerian{n}{k}=0$ for $k<0$ or $k\ge n$. Many authors use slightly different definition and different notation for Eulerian numbers. In their notation $A(n,k)=\Eulerian{n}{k-1}$.  Some initial terms of the Eulerian numbers ID in OEIS (A008292) are shown in Table~\ref{tab1}.

\begin{table}
\caption{First few Eulerian numbers  $\Eulerian{n}{k}$ }\label{tab1}
\begin{center}
\begin{tabular}{c|ccccccccc}
\hline
$n\backslash k$ & 0 & 1 & 2 & 3 & 4 & 5 &6 &7\\
\hline
1  & 1 & 0 & 0 & 0 & 0 & 0 &0&0\\ 
2  & 1 & 1 & 0 & 0 & 0 & 0 &0&0\\ 
3  & 1 & 4 & 1 & 0 & 0 & 0 &0&0\\ 
4 & 1 & 11 & 11 & 1 & 0 & 0&0 &0\\
5 & 1  & 26 & 66 & 26 & 1 & 0&0 &0\\
6& 1 & 57  & 302 & 302 & 57 & 1 &0&0\\
7 & 1  & 120 & 1191 &  2416&  1191& 120 &1&0\\			
\hline
\end{tabular}
\end{center}
\end{table}

In the 1950s Riordan \cite{R} found an interesting combinatorial property of the Eulerian numbers, which now is often used as their definition. It states that $\Eulerian{n}{k}$ is the number of all permutations of the set $(1,2,\ldots, n)$, with exactly $k$ ascents. The ascent appears in a given permutation if for two consecutive terms of the permutation the first term is smaller than the second one.

The  second-order Eulerian numbers ID (A008517) in OEIS, denoted by $\left < \!\! \Eulerian{n}{k} \!\! \right >$, are defined by the following recurrence formula:
\begin{equation}\label{b0}
	\left < \!\! \Eulerian{n+1}{k} \!\! \right >=(k+1)\left < \!\! \Eulerian{n}{k} \!\! \right >+(2n-k+1)\left < \!\! \Eulerian{n}{k-1} \!\! \right >,
\end{equation}
with the same boundary conditions as for the Eulerian numbers. Table~\ref{tab2} shows the first few of these numbers.

\begin{table}
\caption{First few second-order Eulerian numbers $\left < \!\! \Eulerian{n}{k} \!\! \right >$  }\label{tab2}
\begin{center}
\begin{tabular}{c|ccccccccc}
\hline
$n\backslash k$ & 0 & 1 & 2 & 3 & 4 & 5 &6 &7\\
\hline
1  & 1 & 0 & 0 & 0 & 0 & 0 &0&0\\ 
2  & 1 & 2 & 0 & 0 & 0 & 0 &0&0\\ 
3  & 1 & 8 & 6 & 0 & 0 & 0 &0&0\\ 
4 & 1 & 22 & 58 & 24 & 0 & 0&0 &0\\
5 & 1  & 52 & 328 & 444 & 120 & 0&0 &0\\
6& 1 & 114  & 1452 & 4400 & 3708 & 720 &0&0\\
7 & 1  & 240 & 5610 &  32120&  58140& 33984 &5040&0\\		
\hline
\end{tabular}
\end{center}
\end{table}

Eulerian numbers have been well described in books by Comtet \cite{C}, Graham et al. \cite{GKP} and Riordan \cite{R} or by Foata in his review article \cite{F}.   Some applications and generalizations of Eulerian numbers are given in many articles. For example, Carlitz \cite{Ca1, Ca2} introduced q-Eulerian numbers, Lehmer \cite{L} defined a generalization based on the iteration of a differential operator, Butzer and Hauss \cite{BH} described Eulerian numbers with fractional order parameters. Barbero et al. \cite{BSV} essentially solved the problem 6.94, stated in Graham et al. \cite{GKP}, about a solution of the general linear recurrence equation. Chung et al. \cite{CGK} gave three proofs of an identity for Eulerian numbers, He \cite{He} demonstrated their application to B-splines and Banaian \cite{Ba} used them in the theory of juggling. The last application of the Eulerian numbers is especially interesting for the first author because his son Stanis{\l}aw is able to juggle with seven balls.

In Sec.~\ref{sec2} we define a sequence $(G(n, k))$, which is a natural generalization of Eulerian numbers. 
In Sec.~\ref{sec3}  we show that the generalized Eulerian numbers are associated with an autonomous differential equation. Then in Sec.~\ref{sec4} we use, on some examples, solutions of the equation to get integral representations for some numbers, including the Bernoulli numbers.

\section{Definition and properties}\label{sec2}
Let us define a sequence $G(n,k)$ by the recurrence formula
\begin{equation}\label{a1}
	G(n+1,k)=(nw_1-n+k+1)G(n,k)+(nw_2-k+1)G(n,k-1),
	\end{equation}
for integer numbers $n\ge 0,k$ and real parameters $w_1, w_2$, with boundary conditions $G(0,0)=1$, $G(n,k)=0$ for $k<0$ or $k\ge n$.
	
For $w_1=w_2=1$ formula (\ref{a1}) gives Eulerian numbers ID in OEIS (A008292), for $w_1=1,w_2=2$, the second-order Eulerian numbers ID in OEIS (A008517) and if $w_1=1,w_2=3$,  the third-order Eulerian numbers ID in OEIS (A219512). Thus sequence (\ref{a1}) is a natural generalization of Eulerian numbers.

\begin{lemma}\label{lem1}
The sum of the $n$th  row ($n=1,2,3\ldots$) in the array of coefficients $G(n,k)$, for $k=0,\;1,\;2,\ldots,\; n-1$ is a polynomial of $(w_1+w_2)$ of order $n-1$,  and the following formula holds:
		\begin{equation}\label{b1}
		\sum\limits_{k=0}^{n-1}G(n,k)=\prod\limits_{m=0}^{n-1}(m(w_1+w_2)-m+1).
	\end{equation}
\end{lemma}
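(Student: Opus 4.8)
The plan is to reduce the statement to a first-order recurrence for the row sums and then integrate it by induction. Write $S(n)=\sum_{k=0}^{n-1}G(n,k)$; since $G(n,k)=0$ outside $0\le k\le n-1$, I may as well sum over all integers $k$ without changing the value. Summing the defining recurrence (\ref{a1}) over all $k$ gives
\begin{equation*}
S(n+1)=\sum_k (nw_1-n+k+1)\,G(n,k)+\sum_k (nw_2-k+1)\,G(n,k-1).
\end{equation*}
In the second sum I would reindex by $j=k-1$, turning the coefficient $nw_2-k+1$ into $nw_2-j$, so that the two sums run over the same $G(n,\cdot)$ and can be merged termwise.

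The decisive observation is that the $k$-dependent parts cancel: the merged coefficient of $G(n,k)$ is $(nw_1-n+k+1)+(nw_2-k)=n(w_1+w_2)-n+1$, which is independent of $k$. Pulling this constant out of the sum yields the clean one-step recurrence
\begin{equation*}
S(n+1)=\bigl(n(w_1+w_2)-n+1\bigr)\,S(n).
\end{equation*}
With the base value $S(1)=G(1,0)=1$ (computed directly from the boundary condition $G(0,0)=1$), a straightforward induction then gives $S(n)=\prod_{m=0}^{n-1}\bigl(m(w_1+w_2)-m+1\bigr)$, since multiplying the inductive hypothesis for $S(n)$ by the factor $n(w_1+w_2)-n+1$ simply appends the $m=n$ term to the product. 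The polynomial-degree claim follows for free: writing $t=w_1+w_2$, each factor is $m(t-1)+1$, so the $m=0$ factor is the constant $1$ and the remaining factors for $m=1,\dots,n-1$ are linear in $t$, giving total degree $n-1$ in $(w_1+w_2)$.

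The only point requiring genuine care is the reindexing step: I must confirm that shifting $k\mapsto k-1$ introduces no spurious boundary contributions. Because both $G(n,k)$ and $G(n,k-1)$ vanish for all $k$ outside the finite window where the recurrence is active, extending every sum to all of $\mathbb{Z}$ makes the shift exact and sidesteps any endpoint bookkeeping. Past that, the argument is a short telescoping computation followed by induction, so I do not anticipate a substantive obstacle; the crux is recognizing that the asymmetric coefficients $nw_1-n+k+1$ and $nw_2-k+1$ are engineered precisely so their $k$-dependence cancels after the shift, leaving a factor symmetric in $w_1$ and $w_2$.
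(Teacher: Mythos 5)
Your proof is correct and follows essentially the same route as the paper: summing the recurrence (\ref{a1}) over $k$, observing that the $k$-dependent parts of the coefficients cancel to give the first-order row-sum recurrence $S(n+1)=(n(w_1+w_2)-n+1)S(n)$, and then inducting from $S(1)=1$. Your write-up merely makes explicit the reindexing and boundary bookkeeping that the paper's proof leaves to the reader.
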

\begin{proof}
It is easy to check that adding by sides, for a given $n$, all $(n+1)$ identities (\ref{a1}) respectively, for $k=0,1,2,\ldots,n$ we arrive at the following recurrence formula:
\begin{equation}\label{c1}
	\sum\limits_{k=0}^{n}G(n+1,k)=(n(w_1+w_2)-n+1)\sum\limits_{k=0}^{n-1}G(n,k).
\end{equation}

Since $G(1,0)=1$ and $G(2,0)+G(2,1)=w_1+w_2$ then, by induction, from identity (\ref{c1}) we get formula (\ref{b1}).
\end{proof}

Lemma~\ref{lem1} is a particular case of a similar result, obtained by Neuwirth \cite{N} for a more general sequence
	\[G(n+1,k)=(\alpha n+\beta k+\lambda)G(n,k)+(\alpha' n+\beta' k+\lambda')G(n,k-1), \quad \beta+\beta'=0.
\]
Spivey \cite{S} also gives some remarks connected with this result.

Let us denote by $P_{n-1}(x)$ the polynomial of a variable $x$ of order $(n-1)$, which we get by substituting $x=w_1+w_2$ on the right hand side of identity (\ref{b1}), i.e.,
\begin{equation}\label{d1}
	P_{n-1}(x)=\prod\limits_{m=0}^{n-1}(mx-m+1).
\end{equation}

Expanding $P_{n-1}(x)$ for $n=2,3\ldots$ into powers of $x$ and denoting a coefficient  of $x^k$ by $M(n,k)$, we obtain
	  \[P_{n-1}(x)=M(n,1)x+M(n,2)x^2+\cdots +M(n,n-1)x^{n-1}.
	\]
	
It is easy to see that the coefficients ($M(n,k)$), by the formula for generating polynomial (\ref{d1}), fulfill the following recurrence formula (with boundary conditions $M(1,0)=1$, $M(n,0)=0$ for $n\ge 2$, $M(n,k)=0$ for $k\ge n$):
	\[M(n+1,k)=(1-n)M(n,k)+nM(n,k-1).
\]

Table~\ref{tab3} shows the first few elements of the sequence ($M(n,k)$). The sequence ($|M(n,k)|$) appears in OEIS with ID (A059364), however without any information about a generating polynomial of the $n$th row of the array. The polynomial, in this case, has the following form:
	\[|M(n,1)|x+|M(n,2)|x^2+\cdots +|M(n,n-1)|x^{n-1}=\prod\limits_{m=1}^{n-1}(mx+m-1).
\]

\begin{table}
\caption{First few numbers of the sequence ($M(n,k)$) }\label{tab3}
\begin{center}
\begin{tabular}{c|cccccccc}
\hline
$n\backslash k$ & 0 & 1 & 2 & 3 & 4 & 5 &6 &7\\
\hline
1  & 1 & 0 & 0 & 0 & 0 & 0 &0&0\\ 
2  & 0 & 1 & 0 & 0 & 0 & 0 &0&0\\
3 & 0 & -1 & 2 & 0 & 0 & 0 &0&0\\
4 & 0  & 2& -7& 6 & 0 & 0 &0&0\\
5& 0 & -6 & 29 & -46 & 24 & 0 &0&0\\
6 & 0  & 24 & -146 & 329 & -326 & 120 &0 &0\\
7 & 0 & -120 & 874 & -2521 & 3604 &  -2556&720&0\\									
\hline
\end{tabular}
\end{center}
\end{table}

\section{Main results}\label{sec3}
Let $u=u(z)$ be a holomorphic function defined in a domain $D \subset \mathbb{C}$ which fulfills the  following autonomous first-order differential equation with constant coefficients:
\begin{equation}\label{a2}
	u'(z)=r(u-a)^{w_1}(u-b)^{w_2},
\end{equation}
where $r,a,b$ are real or complex numbers, $r\neq 0, \; a \neq b$ and $w_1, w_2$ are real numbers. We understand the powers $(u-a)^{w_1} $ and $(u-b)^{w_2}$  as $e^{w_1\log(u-a)}$, $e^{w_2\log(u-b)}$ for a branch of the logarithm function.

\begin{theorem}\label{th1}
If a function $u(z)$ satisfies equation (\ref{a2}),
then the $n$th derivative of $u(z)$ equals
\begin{equation}\label{b2}
u^{(n)}(z) = r^{n}\sum\limits_{k=0}^{n-1}
G(n,k)(u-a)^{nw_1-n+k+1}(u-b)^{nw_2-k}
\end{equation}
where $n=2,3,\ldots $.
\end{theorem}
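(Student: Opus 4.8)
The plan is to prove formula (\ref{b2}) by induction on $n$. The base case $n=2$ can be verified directly: differentiating (\ref{a2}) once gives
\[
u''(z)=r\bigl(w_1(u-a)^{w_1-1}(u-b)^{w_2}+w_2(u-a)^{w_1}(u-b)^{w_2-1}\bigr)u',
\]
and substituting $u'=r(u-a)^{w_1}(u-b)^{w_2}$ yields a combination of terms of the asserted shape, from which one reads off $G(2,0)=w_2$ and $G(2,1)=w_1$ (consistent with the recurrence (\ref{a1}) and the boundary conditions). The inductive step is the heart of the argument: assuming (\ref{b2}) holds for some $n\ge 2$, I would differentiate both sides with respect to $z$, using the chain rule so that every occurrence of $u$ produces a factor $u'$, and then immediately replace $u'$ by $r(u-a)^{w_1}(u-b)^{w_2}$ via (\ref{a2}).

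Carrying out this differentiation, each summand $G(n,k)(u-a)^{\alpha_k}(u-b)^{\beta_k}$ with $\alpha_k=nw_1-n+k+1$ and $\beta_k=nw_2-k$ contributes, after multiplying by $u'=r(u-a)^{w_1}(u-b)^{w_2}$, a term
\[
r^{n+1}G(n,k)\bigl(\alpha_k(u-a)^{\alpha_k+w_1-1}(u-b)^{\beta_k+w_2}+\beta_k(u-a)^{\alpha_k+w_1}(u-b)^{\beta_k+w_2-1}\bigr).
\]
The key bookkeeping observation is that the exponents on the right are exactly those expected for $u^{(n+1)}$: the target summand indexed by $k$ in the $(n+1)$th derivative carries exponents $(n+1)w_1-(n+1)+k+1=nw_1-n+k+w_1$ on $(u-a)$ and $(n+1)w_2-k=nw_2-k+w_2$ on $(u-b)$. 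The first piece above (from differentiating the $(u-a)$ factor of the $k$th term) matches the target index $k$, while the second piece (from differentiating the $(u-b)$ factor of the $k$th term) matches the target index $k+1$. Collecting the coefficient of the monomial with target index $k$ thus gives $\alpha_k G(n,k)+\beta_{k-1}G(n,k-1)$.

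It then remains to check that this coefficient equals $G(n+1,k)$. Substituting $\alpha_k=nw_1-n+k+1$ and $\beta_{k-1}=nw_2-(k-1)=nw_2-k+1$, the coefficient becomes
\[
(nw_1-n+k+1)G(n,k)+(nw_2-k+1)G(n,k-1),
\]
which is precisely the defining recurrence (\ref{a1}) for $G(n+1,k)$. This closes the induction. The main obstacle is purely organizational rather than conceptual: one must track the index shift carefully, verify that the boundary terms (at $k=0$ and $k=n$) are handled correctly by the convention $G(n,k)=0$ for $k<0$ or $k\ge n$, and confirm that the summation range $k=0,\ldots,n$ for $u^{(n+1)}$ emerges correctly from the two overlapping shifted sums.
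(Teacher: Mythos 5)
Your proof is correct and takes essentially the same route as the paper's: induction on $n$, term-by-term differentiation of (\ref{b2}) using the chain rule, immediate substitution of $u'=r(u-a)^{w_1}(u-b)^{w_2}$ from (\ref{a2}), and reindexing the two shifted sums so that the coefficient of each resulting monomial is exactly the right-hand side of the recurrence (\ref{a1}); the paper merely starts the induction at $n=1$, where (\ref{b2}) reduces to (\ref{a2}) itself, rather than at $n=2$. One slip in your base case: matching your displayed expression for $u''$ against (\ref{b2}) gives $G(2,0)=w_1$ and $G(2,1)=w_2$ (the $k=0$ term is the one carrying $(u-b)^{2w_2}$), not the swapped values you state; with that correction the values agree with the recurrence and boundary conditions, and the rest of your argument, in particular the inductive step, is sound.
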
 
\begin{proof} 
We will proceed by induction. For $n=1$ formula (\ref{b2}) becomes definition (\ref{a2}), therefore is true. Let us assume that for a positive integer $n$ formula (\ref{b2}) holds. By the chain rule and recurrence formula (\ref{a1})  we get 
\begin{align*}
 u^{(n+1)}(z)=& r^{n}\frac{d}{dz}\sum\limits_{k=0}^{n-1} G(n,k)(u-a)^{nw_1-n+k+1}(u-b)^{nw_2-k}\nonumber \\
=& r^{n}\sum\limits_{k=0}^{n-1}G(n,k)\left[(nw_1-n+k+1)(u-a)^{nw_1-n+k}(u-b)^{nw_2-k} \right.\\
&\left.+(nw_2-k)(u-a)^{nw_1-n+k+1}(u-b)^{nw_2-k-1}\right]r(u-a)^{w_1}(u-b)^{w_2}\\
=&r^{n+1}\left[G(n,0)(nw_1-n+1)(u-a)^{(n+1)w_1-n}(u-b)^{(n+1)w_2}\right.\\
&+\sum\limits_{k=1}^{n-1}((nw_1-n+k+1)G(n,k)+(nw_2-k+1)G(n,k-1))(u-a)^{(n+1)w_1-n+k}\\
&\left. \times(u-b)^{(n+1)w_2-k}+G(n,n-1)(nw_2-n+1)(u-a)^{(n+1)w_1}(u-b)^{(n+1)w_2-n}\right]\\
=&r^{n+1}\sum\limits_{k=0}^{n}
G(n+1,k)(u-a)^{(n+1)w_1-(n+1)+k+1}(u-b)^{(n+1)w_2-k}
\end{align*}
which ends the proof.
\end{proof}
\begin{remark}
Formula (\ref{b2}) may also be formulated in a real sense. Let us rewrite equation (\ref{a2}) as
\begin{equation}\label{c2}
	v'(z)=s(v-a)^{w_1}(b-v)^{w_2},
\end{equation}
where $s,a,b, w_1, w_2$ are real numbers, $s\neq 0$ and $a  <b$. We are looking for a real solution $v(z)$ of (\ref{c2}), which fulfills the condition $a\le v(z) \le b$, where $z\in D \subset \Re$ ($D$ interval). Then if $v(z)$ be such a solution then its $n$th derivative is given by the following formula:
\begin{equation}\label{d2}
v^{(n)}(z) = s^{n}\sum\limits_{k=0}^{n-1}(-1)^k
G(n,k)(v-a)^{nw_1-n+k+1}(b-v)^{nw_2-k}.
\end{equation}
The proof of (\ref{d2}) is analogous to that of Theorem~\ref{th1}.
\end{remark}

Let us denote by $G_{n}(u)$ and $H_{n}(u)$ the sums on the right hand side of formulas (\ref{b2}) and  (\ref{d2}) respectively, i.e.,
\begin{equation}\label{h2}
		G_{n}(u)=\sum\limits_{k=0}^{n-1}
		G(n,k)(u-a)^{nw_1-n+k+1}(u-b)^{nw_2-k},
\end{equation}
\begin{equation}\label{i2}
	H_{n}(v)=\sum\limits_{k=0}^{n-1}(-1)^k
	G(n,k)(v-a)^{nw_1-n+k+1}(b-v)^{nw_2-k}.
\end{equation}

When $u(z)$ is a solution of (\ref{a2}), given in an explicit form, then we are able to compute the exponential generating function for the functions $(G_{n}(u))_{n\ge 1}$ in the following form:
	\[g(u,w)=u+rG_{1}(u)w+r^2G_{2}(u)\frac{w^2}{2!}+r^3G_{3}(u)\frac{w^3}{3!}+\cdots.
\]
In order to do it let us observe that $g(u(z),w)$ is the Taylor expansion of $u(z)$ at the point $z$. Therefore we obtain
\begin{align}
g(u(z),w)=& u(z)+rG_{1}(u(z))w+r^2G_{2}(u(z))\frac{w^2}{2!}+r^3G_{3}(u(z))\frac{w^3}{3!}+\cdots \nonumber \\
=&u(z) +u'(z)w+u''(z)\frac{w^2}{2!}+u'''(z)\frac{w^3}{3!}+\cdots =u(z+w).\label{e2}
\end{align}
Assuming moreover that $u(z)$ is invertible on the set $D$ we obtain from (\ref{e2}) the following formula for the generating function $g(u,w)$:
\begin{equation}\label{f2}
g(u,w)=u(z(u)+w).
	\end{equation}
 
 Similarly we can express the exponential generating function for functions $(H_{n}(v))_{n\ge 1}$ in the form
	\[h(v,w)=v+sH_{1}(v)w+s^2H_{2}(v)\frac{w^2}{2!}+s^3H_{3}(v)\frac{w^3}{3!}+\cdots,
\]
associated with a solution $v(z)$ of equation (\ref{c2}). With the same assumptions as before we have
\begin{equation}\label{g2}
h(v,w)=v(z(v)+w).
	\end{equation}

\section{Examples}\label{sec4}
\textbf{Example 1.} Putting $w_1=w_2=1$ into (\ref{a1}) we obtain recurrence (\ref{a0}) for theEulerian numbers. Equation (\ref{a2}) in this case is the Riccati differential equation with constant coefficients  
\begin{equation}\label{a4}
	u'(z)=r(u-a)(u-b),
\end{equation}
and formula (\ref{b2}) takes the following form:
\begin{equation}\label{b4}
u^{(n)}(z) = r^{n}\sum\limits_{k=0}^{n-1} \Eulerian{n}{k}
(u-a)^{k+1}(u-b)^{n-k},
\end{equation}
where $n=2,3,\ldots $. 

Formula (\ref{b4}) was discussed during the Conference ICNAAM 2006 in Greece and it appeared, with an inductive proof, in  paper \cite{Rz} (see also \cite{Rz1}). Independently Franssens \cite{F} considered and proved formula (\ref{b4}), giving a proof based on generating functions. Hoffman \cite{Ho} introduced so called derivative polynomials and stated similar problems for the first time.

In paper \cite[Th.\ 2.2, p.\ 124]{Rz2} we have proved (taking in (\ref{b4}): $r=1$, $a=0$ and $b=1$)
\begin{equation}\label{c4}
	\int_{0}^{1}\ \sum\limits_{k=0}^{n-1}\Eulerian{n}{k} 
u^{k+1}(u-1)^{n-k}du=-B_{n+1},
\end{equation}
where $B_n$ is the $n$th Bernoulli number. Bernoulli numbers are well described in books by Graham et al. \cite{GKP} or Duren \cite{D}. Let us mention that for such parameters one of the solutions of the Riccati equation (\ref{a4}) is $u(z)=1/(1+e^z)$.

From equation (\ref{c4}) we can obtain at least two remarkable formulas. The first one we get by substituting in (\ref{c4}) $u=u(z)=1/(1+e^z)$, which yields
\begin{equation}\label{d4}
	\int_{-\infty}^{\infty}\left(\frac{1}{1+e^z}\right)^{(n)}\left(\frac{1}{1+e^z}\right)'dz=B_{n+1}.
\end{equation}

Since 
	\[\left(\frac{1}{1+e^z}\right)'=\frac{-e^z}{(1+e^z)^2}=-\frac{1}{4\cosh^2(\frac{z}{2})},
\]
then, assuming that $n+1$  is an even number, say $n+1=2m$, we substitute in (\ref{d4}) $x=z/2$ and integrate $m-1$ times by parts. In this way we arrive at the Grosset-Veselov formula
					\[\frac{(-1)^{m-1}}{2^{2m+1}}\int_{-\infty}^{\infty}\left(\frac{d^{m-1}}{dx^{m-1}}\frac{1}{\cosh^2(x)}\right)^2 dx=B_{2m}.
		\]
Grosset and Veselov \cite{GV} obtained it while examining soliton solutions of the KdV equation. 

The second formula we get, directly integrating the left hand side of equation (\ref{c4}). Since 
	\[\int_{0}^{1}u^{k+1}(u-1)^{n-k}du= (-1)^{n-k}\frac{(k+1)!(n-k)!}{(n+2)!}=
	\frac{(-1)^{n-k}}{n+2}\frac{1}{{n+1 \choose k+1}},
\]
then we can rewrite (\ref{c4}) in the following form:
\begin{equation}\label{e4}
	\sum\limits_{k=0}^{n-1}(-1)^{n-k+1} \frac{\Eulerian{n}{k}}{{n+1 \choose k+1}}=(n+2)B_{n+1},
\end{equation}
 valid for $n=1,2,3,\ldots$. 

\textbf{Example 2.} Putting $w_1=w_2=\frac12$ into (\ref{a1}) we obtain the following recurrence formula:
\begin{equation}\label{f4}
	G(n+1,k)=\bigl(-\frac{n}{2}+k+1\bigr)G(n,k)+\bigl(\frac{n}{2}-k+1\bigr)G(n,k-1).
\end{equation}
From (\ref{f4}) it follows that if $n$ is an odd positive integer then $G(n,k)$ differs from zero only for $k=\frac{n-1}{2}$ with $G(n,\frac{n-1}{2})=1$. When $n\ge 2$ is an even integer then only $G(n,\frac{n}{2})=G(n,\frac{n}{2}-1)=\frac12$ are different from zero (see Table~\ref{tab4}).

\begin{table}
\caption{First few numbers of the sequence ($G(n,k)$) for $w_1=w_2=\frac12$}\label{tab4}
\begin{center}
\begin{tabular}{c|ccccccc}
\hline
$n\backslash k$ & 0 & 1 & 2 & 3 & 4 & 5  \\
\hline
1  & 1 & 0 & 0 & 0 & 0 & 0 \\ 
2  & $\frac12 $ & $\frac12 $ & 0 & 0 & 0 & 0 \\
3 & 0 & 1 & 0 & 0 & 0 & 0 \\
4 & 0  & $\frac12 $ & $\frac12 $ & 0 & 0 & 0 \\
5& 0 & 0  & 1 & 0 & 0 & 0 \\
6 & 0  & 0 & $\frac12 $ & $\frac12 $ & 0 & 0  \\
7 & 0 & 0 & 0 & 1 & 0 & 0 \\
8 & 0  & 0 & 0 & $\frac12 $ & $\frac12 $ & 0   \\
\hline
\end{tabular}
\end{center}
\end{table}

Let us consider a differential equation of type (\ref{c2}) associated with the sequence (\ref{f4}), say for $s=1,\; a=-1,\; b=1,\; w_1=w_2=\frac12$
\begin{equation}\label{g4}
	v'(z)=(v+1)^{\frac12}(1-v)^{\frac12}=\sqrt{1-v^2}.
\end{equation}

One of the solutions of (\ref{g4}), for the initial condition $v(0)=0$, is $v(z)=\sin z$ for $z\in [-\pi/2,\pi/2] $. Using formulas (\ref{i2}) and (\ref{d2}), or directly from derivatives of $v(z)=\sin z$, it follows that $H_1(v)=\sqrt{1-v^2}$, $H_2(v)=-v$, $H_3(v)=-\sqrt{1-v^2}$, $H_4(v)=v$ and 
$H_n(v)=H_{n-4}(v)$ for $n\ge5$. 

The generating function  (\ref{g2}) is easy to find in the present case. Since
	\[h(v(z),w)=v(z+w)=\sin(z+w)=\sin z\cos w +\cos z \sin w, 
\]
then 
	\[h(v,w)=u\cos w+\sqrt{1-v^2}\sin w.
\]

Formulas corresponding to (\ref{c4}) and (\ref{d4})  are trivial in this case. Since 
$\int_{-1}^{1}\sqrt{1-v^2}dv=\pi/2$ we have
	\[\int_{-1}^{1}h(v,w)dv=\frac{\pi}{2}\sin w= \frac{\pi}{2}\bigl(w-\frac{w^3}{3!}
	+\frac{w^5}{5!}-\frac{w^7}{7!}+\cdots\bigr),
\]
and then
\begin{displaymath}
 \int_{-1}^{1}H_n(v)dv= \begin{cases}
0, & \text{if $n$ is even;}\\
(-1)^{\frac{n-1}{2}}\frac{\pi}{2}, & \text{if $n$ is odd.}
\end{cases}
\end{displaymath}

Therefore the formula corresponding to (\ref{d4}) is
\begin{displaymath}
 \int_{-\frac{\pi}{2}}^{\frac{\pi}{2}}(\sin z)^{(n)}(\sin z)'dz=
\int_{-\frac{\pi}{2}}^{\frac{\pi}{2}}(\sin z)^{(n)}\cos zdz=
 \begin{cases}
0, & \text{if $n$ is even;}\\
(-1)^{\frac{n-1}{2}}\frac{\pi}{2}, & \text{if $n$ is odd.}
\end{cases}
\end{displaymath}

\textbf{Example 3.} Let us put $w_1=\frac12, w_2=1$ into (\ref{a1}). We have
 \begin{equation}\label{h4}
	G(n+1,k)=\bigl(-\frac{n}{2}+k+1\bigr)G(n,k)+(n-k+1)G(n,k-1).
\end{equation}

Some initial terms of the sequence are shown in Table~\ref{tab5}. 
\begin{table}
\caption{First few numbers of the sequence ($G(n,k)$) for $w_1=\frac12, w_2=1$}\label{tab5}
\begin{center}
\begin{tabular}{c|ccccccccc}
\hline
$n\backslash k$ & 0 & 1 & 2 & 3 & 4 & 5 &6 &7\\
\hline
1  & 1 & 0 & 0 & 0 & 0 & 0 &0&0\\ 
2  & $\frac12 $ & 1 & 0 & 0 & 0 & 0 &0&0\\ 
3  & 0 & 2 & 1 & 0 & 0 & 0 &0&0\\ 
4 & 0 & 1 & $\frac{11}{2}$ & 1 & 0 & 0&0 &0\\
5 & 0  & 0 & $\frac{17}{2} $ & 13 & 1 & 0&0 &0\\
6& 0 & 0  & $\frac{17}{4} $ & 45 & $\frac{57}{2} $ & 1 &0&0\\
7 & 0  & 0 & 0 &  62&  192& 60 &1&0\\		
8 & 0 & 0 & 0 & 31 & 536 & 726 &$\frac{247}{2} $&1\\			
\hline
\end{tabular}
\end{center}
\end{table}

The numerical evidence shows that every odd row of the above sequence corresponds to a certain row of the sequence ID (A160468) in OEIS. 

Let us consider also a differential equation of type (\ref{a2}) associated with sequence 
(\ref{h4}), say for parameters $r=1,\; a=0,\; b=1, \; w_1=\frac12, \; w_2=1$
\begin{equation}\label{i4}
	u'(z)=\sqrt{u}(u-1).
\end{equation}

One of the solutions of the equation (\ref{i4}) is 
\begin{equation}\label{j4}
	u(z)=\left(\frac{1-e^z}{1+e^z}\right)^2,
\end{equation}
which we will consider only in the interval $(-\infty, 0]$, where $u(z)$ is a monotonic (decreasing) function. The inverse function of the function (\ref{j4}) is 
\begin{equation}\label{k4}
	z(u)=\log \frac{1-\sqrt{u}}{1+\sqrt{u}},
\end{equation}
defined in the interval $[0,1)$. 

Now we will find the generating function (\ref{f2}). Since
	\[g(u(z),w)=u(z+w)=\left(\frac{1-e^{z+w}}{1+e^{z+w}}\right)^2,
\]
then using (\ref{k4}) we have
\begin{equation}\label{m4}
	g(u,w)=\left(\frac{1-e^{w}\frac{1-\sqrt{u}}{1+\sqrt{u}}}{1+e^{w}\frac{1-\sqrt{u}}{1+\sqrt{u}}}\right)^2=
	\left(\frac{1+\sqrt{u}-e^{w}(1-\sqrt{u})}{1+\sqrt{u}+e^{w}(1-\sqrt{u})}\right)^2.
\end{equation}

Let us denote by $f(w)$ the integral of the generating function (\ref{m4})
\begin{align*}
	f(w)=&\int_{0}^{1}g(u,w)du=\int_{0}^{1}\left(\frac{1+\sqrt{u}-e^{w}(1-\sqrt{u})}{1+\sqrt{u}+e^{w}(1-\sqrt{u})}\right)^2du \\
	=&\frac{16(e^{2w}+4e^w+1)e^w\log(\frac{2}{e^w+1})+(e^w-1)(e^{3w}+33e^{2w}+15e^w-1)}{(e^w-1)^4}.
\end{align*}

Thus, in this case, the formula corresponding to (\ref{c4}) is
	\[\int_{0}^{1}G_n(u)du=f_n,
\]
and this corresponding to (\ref{d4}) is as follows: 
	\[\int_{-\infty}^{0}(u(z))^{(n)}u'(z)dz=
	\int_{-\infty}^{0}\frac{d^n}{dz^n}\left(\frac{1-e^z}{1+e^z}\right)^2\cdot 
	\frac{d}{dz}\left(\frac{1-e^z}{1+e^z}\right)^2 dz=-f_n,
\]
where $f_n$ is the coefficient of $w^n/n!$ in the Taylor expansion of the function $f(w)$
	\[f(w)=\frac12-\frac{4}{15}w+\frac{4}{21}\frac{w^3}{3!}+\frac18\frac{w^4}{4!}
	-\frac{4}{15}\frac{w^5}{5!}-\frac12 \frac{w^6}{6!}+\frac{20}{33}\frac{w^7}{7!}
	+\frac{21}{8}\frac{w^8}{8!}-\frac{2764}{1365}\frac{w^9}{9!}+O(w^{10}).
	\]

\textbf{Example 4.} By substituting in (\ref{a1}) $w_1=1,\ w_2=2$ we get the recurrence (\ref{b0}) for  
 the second-order Eulerian numbers $\displaystyle G(n,k)=\left < \!\! \Eulerian{n}{k} \!\! \right >$.

The following differential equation: 
\begin{equation}\label{p4}
u'(z)=u(u-1)^2
\end{equation}
associated with (\ref{b0}), unfortunately cannot be explicitly solved. Denoting by $u=L(z)$ the solution of (\ref{p4}) with the initial condition $L(0)=\frac12$, we  can only find  its inverse function 
	\[z(u)=L^{-1}(u)=\log \frac{u}{1-u} +\frac{u}{1-u} -1, \qquad u\in (0,1).
\]

The numerical evidence shows (as checked for $n\le 9$) that 
\begin{equation}\label{q4}
	\int_{0}^{1}G_n(u)du=\int_{0}^{1}\sum\limits_{k=0}^{n-1}
\left < \!\! \Eulerian{n}{k} \!\! \right> u^{k+1}(u-1)^{2n-k}du=\frac{B_{n+1}}{n+1},
\end{equation}
where $B_n$ is the $n$th Bernoulli number.

If the hypothesis (\ref{q4}) were true, it would lead to the following formula:
\begin{align*}
	&\int_{0}^{1}g(u,w)du=\int_{0}^{1}L\bigl(\log \frac{u}{1-u} +\frac{u}{1-u} -1+w\bigr)du \\
	&= \frac12+\frac{B_2}{2}w+\frac{B_3}{3}\frac{w^2}{2!}+\frac{B_4}{4}\frac{w^3}{3!}+\cdots=
	\frac{1}{w}\bigl(-1+w+1+B_1w+B_2\frac{w^2}{2!}+B_3\frac{w^3}{3!}+\cdots \bigr)\\
	&=\frac{1}{w}\bigl(-1+w+\frac{w}{e^w-1}\bigr)=-\frac{1}{w}+\frac{e^w}{e^w-1},
\end{align*}
where we have employed the generating function for Bernoulli numbers, often used as their definition
	\[\frac{w}{e^w-1}=1+B_1w+B_2\frac{w^2}{2!}+B_3\frac{w^3}{3!}+\cdots .
\]

Moreover, since
	\[\int_{0}^{1}u^{k+1}(u-1)^{2n-k}du= (-1)^{k}\frac{(k+1)!(2n-k)!}{(2n+2)!}=
	\frac{(-1)^{k}}{2(n+1)}\frac{1}{{2n+1 \choose k+1}},
\]
then from (\ref{q4}) we would also get
	\[\sum\limits_{k=0}^{n-1}(-1)^k\frac{\left < \!\! \Eulerian{n}{k} \!\! \right>}{{2n+1 \choose k+1}}=2B_{n+1}.
\]

The Reader may find an interesting, but unfinished, discussion on a similar topic on the web page MathOverflow \cite{Ma}.

\end{document}